\documentclass[12pt,oneside]{amsart}
\usepackage[margin=1in]{geometry}

\usepackage{graphicx}
\usepackage{amssymb}
\usepackage{tikz}
\usepackage{tikz-cd}

\DeclareGraphicsRule{.tif}{png}{.png}{`convert #1 `dirname #1`/`basename #1 .tif`.png}
\usepackage{amsmath,amsthm,amscd}
\usepackage{latexsym}
\usepackage[colorlinks,citecolor=red,pagebackref,hypertexnames=false]{hyperref}
\hypersetup{%
  colorlinks = true,
  citecolor=red,
  linkcolor  = black
}

\numberwithin{equation}{section}
\theoremstyle{plain}
\newtheorem{theorem}{Theorem}[section]
\newtheorem*{theorem*}{Theorem}
\newtheorem{lemma}[theorem]{Lemma}
\newtheorem{corollary}[theorem]{Corollary}

\theoremstyle{definition}

\theoremstyle{remark}

\newcommand{\Slim}[1]{\sum\limits_{#1}}

\title[Restriction Theory of the Parabola over $\mathbb{Z}/N\mathbb{Z}$ for Squarefree $N$]{On a Restriction Problem of Hickman and Wright for the Parabola over $\mathbb{Z}/N\mathbb{Z}$ for Squarefree $N$}
\author{Nathaniel Kingsbury-Neuschotz}
\date{\today}

\begin{document}
\begin{abstract}
Hickman and Wright proved an $L^2$ restriction estimate for the parabola $\Sigma$ over $\mathbb{Z}/N\mathbb{Z}$ of the form

  $$\left(\frac{1}{|\Sigma|}\sum\limits_{m\in\Sigma}|\widehat{f}(m)|^2 \right)^{\frac{1}{2}}\leq C_\epsilon N^\epsilon\cdot N^{-1}\left(\sum\limits_{x\in (\mathbb{Z}/N\mathbb{Z})^2}|f(x)|^\frac{6}{5}\right)^\frac{5}{6}$$

for all functions $f:(\mathbb{Z}/N\mathbb{Z})^2\rightarrow \mathbb{C}$ and any $\epsilon>0$, and showed that this bound is sharp when $N$ has a large square factor, especially for $N = p^2$ where $p$ is prime. In contrast, Mockenhaupt and Tao proved in the special case $N = p$ the stronger estimate

$$\left(\frac{1}{|\Sigma|}\sum\limits_{m\in\Sigma}|\widehat{f}(m)|^2 \right)^{\frac{1}{2}}\leq C N^{-1}\left(\sum\limits_{x\in (\mathbb{Z}/N\mathbb{Z})^2}|f(x)|^\frac{4}{3}\right)^\frac{3}{4}.$$

We extend the Mockenhaupt--Tao bound to the case of squarefree $N$, proving

$$\left(\frac{1}{|\Sigma|}\sum\limits_{m\in\Sigma}|\widehat{f}(m)|^2 \right)^{\frac{1}{2}}\leq C_\epsilon N^\epsilon\cdot N^{-1}\left(\sum\limits_{x\in (\mathbb{Z}/N\mathbb{Z})^2}|f(x)|^\frac{4}{3}\right)^\frac{3}{4},$$

and in fact a slightly sharper version with $C_\epsilon N^\epsilon$ replaced with $2^\frac{\omega(N)}{4}$, where $\omega(N)$ is the number of prime factors of $N$. We also discuss applications of this result to uncertainty principles and signal recovery.

\end{abstract}
\maketitle

\section{Introduction}
In \cite{HickmanWright}, Hickman and Wright discussed a number of restriction problems for Fourier transforms. One simple formulation asks, for a given set $\Sigma$ of frequencies described as the graph of a polynomial function, for what values of $s$ and $r$ is there a constant $C_\epsilon$ for each $\epsilon>0$ such that for any function $f: (\mathbb{Z}/N\mathbb{Z})^d\rightarrow \mathbb{C}$:

    \begin{equation} \label{eq:HickmanWrightRestriction} \left(\frac{1}{|\Sigma|}\sum\limits_{m\in\Sigma}|\widehat{f}(m)|^s \right)^{\frac{1}{s}}\leq C_\epsilon N^\epsilon\cdot N^{-d/2}\left(\sum\limits_{x\in (\mathbb{Z}/N\mathbb{Z})^d}|f(x)|^r\right)^\frac{1}{r}, \end{equation} 

    where the Fourier transform is normalized\footnote{Note that our normalization differs from that of Hickman and Wright, so that the right-hand side of the above inequality includes a factor of $N^{-d/2}$ not present in \cite{HickmanWright}.} by
    $$\widehat{f}(m) = N^{-d/2}\sum\limits_{x\in(\mathbb{Z}/N\mathbb{Z})^d}f(x)e^{-2\pi i x\cdot m/N}.$$

In the particular case of the parabola $\Sigma = \{(t, t^2):t\in\mathbb{Z}/N\mathbb{Z}\},$ if $s = 2$, the results of Hickman and Wright establish that such an estimate is available when $1\leq r\leq \frac{6}{5}$ (\cite{HickmanWright}, Theorem 6.5), and furthermore that for general $N$ this is sharp: for $r > \frac{6}{5},$ no such estimate may be made (see the discussion of the Knapp example on page 5 of \cite{HickmanWright}; the whole result described above is stated informally as Theorem 1.2).\footnote{Unlike most restriction problems in analysis, the Hickman--Wright restriction problem allows the factor of $C_\epsilon N^\epsilon$ on the right-hand side in order to account for factors arising from the number of divisors and/or the number of prime divisors of $N$; see Theorem \ref{th:main} and Corollaries \ref{NBoundedFactors} and \ref{GeneralN} to see how these issues arise quite concretely.}

For special classes of $N$, however, this result may be improved. For example, when $N = p$ is an odd prime, the restriction estimate holds for $1\leq r \leq \frac{4}{3}$; see for instance \cite{MockenhauptTao}. A restriction theorem with the same exponent over fields was also found for circles in \cite{IoKohCircles}, along with restriction theorems for general quadratic surfaces; for paraboloids, various authors have developed ever-stronger restriction estimates, see for instance \cite{IosevichKohLewko2020}, \cite{LewkoLewko2012}, \cite{Lewko2024}, \cite{KohPhamVinh2018}, and \cite{Koh2016}. In their sharpness example, Hickman and Wright require a large divisor $d|N$ such that $d^2|N$, so that the possible restriction estimates over $\mathbb{Z}/N\mathbb{Z}$ for squarefree $N$ were left unresolved. 

Another perspective on Fourier restriction comes from the related issue of Fourier decay: in \cite{MockenhauptTao}, the $r = \frac{4}{3}$ restriction estimate is deduced from an optimal square-root Fourier decay estimate of the form

$$|\widehat{\Sigma}(m)| \leq \frac{\sqrt{|\Sigma|}}{p} = p^{-\frac{1}{2}}$$

for all nontrivial characters $\chi_m$, which boils down to the evaluation of a quadratic Gauss sum. Over $\mathbb{Z}/N\mathbb{Z}$ for any composite $N$, even squarefree, the Fourier transform of the parabola does not have such optimal Fourier decay---this may be computed by hand within $(\mathbb{Z}/N\mathbb{Z})^2$, but also follows from the following general theorem (\cite{Kings1}, Theorems 2.27 and 3.4; see \cite{IoMPak} and \cite{Kings2} for similar results):

\begin{theorem*}
    Let $f(X_1,\dots, X_{d-1})$ be a polynomial in $\mathbb{Z}[X_1,\dots, X_{d-1}].$ Let $V_f(R)$ denote the solution set to $X_d = f(X_1,\dots, X_{d-1})$ over $R$. Suppose a sequence of finite rings $\{R_i\}_{i = 1}^\infty$ has the property that exponential sums over $V_f(R_i)$ have square root cancellation, in that there exists some constant $C>0$ such that for all $i$ and all nontrivial characters $\chi_m$ of $R_i^d$,

$$|\widehat{V_f(R_i)}(\chi_m)| \leq C\frac{\sqrt{|V_f(R_i)|}}{|R_i|^\frac{d}{2}} = C|R_i|^{-\frac{1}{2}}.$$

Then all but finitely many of the rings are fields or matrix rings of small dimension relative to $d$.\footnote{Precisely: if $d\geq 4$, all but finitely many of the rings are fields or $2\times 2$ matrix rings; if $d = 3$, all but finitely many of the rings are fields or $2\times 2$ or $3\times 3$ matrix rings; if $d = 2$, all but finitely many of the rings are fields or $2\times 2$, $3\times 3$, or $4\times 4$ matrix rings.} In the case where $f(X_1,\dots, X_{d-1}) = X_1^2 + \dots + X_{d-1}^2$, all but finitely many of the rings are fields.
\end{theorem*}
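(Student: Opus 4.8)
The plan is to prove the contrapositive in quantitative form: instead of working with the fixed constant $C$, I would show that whenever a finite ring $R$ is neither a field nor a matrix ring of the permitted dimension, the best Fourier decay for $V_f(R)$ is worse than $|R|^{-1/2}$ by a factor that tends to infinity with $|R|$. Since there are only finitely many finite rings of any bounded order, this at once forces all but finitely many of the $R_i$ onto the stated list. The first move is structural. Decomposing the additive group of $R$ into its $p$-primary parts and invoking the Chinese Remainder Theorem for finite rings gives $R \cong \prod_k R_k$ with each $R_k$ of prime-power order, and because both the equation $X_d = f(X_1,\dots,X_{d-1})$ and every additive character split across this product, the exponential sum defining the transform factors: in the normalization above one has $\widehat{V_f(R)}(m) = \prod_k \widehat{V_f(R_k)}(m_k)$. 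Placing a resonant character on one factor and the trivial character on the others --- where $\widehat{V_f(R_k)}(0) = |R_k|^{(d-2)/2}$ only amplifies the sum --- shows that a single bad factor already forces failure of the uniform bound. Hence it suffices to exhibit, for each ``bad'' indecomposable ring, a single character witnessing growing excess; the indecomposable pieces are exactly local rings and full matrix rings over finite fields.

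Next I would dispose of rings with nonzero Jacobson radical $J = J(R)$. The model case is $R = \mathbb{Z}/p^2\mathbb{Z}$: taking the coefficient $m_d$ of $f$ to be the nonunit $p$ makes the phase $\psi(p\,f(x))$ constant on the cosets of $p\mathbb{Z}/p^2\mathbb{Z}$, so the sum degenerates into $p$ equal copies of a Weil--Gauss sum over $\mathbb{Z}/p\mathbb{Z}$ and reaches size $p^{3/2}$ against the admissible $p$. In general I would choose $m_d$ in the annihilator of a suitable power of $J$ so that translation $x \mapsto x + j$ by $j$ in a large submodule leaves the phase invariant; this resonance yields $\max_{m \neq 0}|\widehat{V_f(R)}(m)| \geq c\,|R|^{-1/2}\,|J|^{1/2}$, and the excess factor $|J|^{1/2}$ blows up along any infinite family. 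Thus only finitely many non-semisimple indecomposable rings survive, and the survivors must be semisimple, i.e.\ matrix rings $M_n(\mathbb{F}_q)$ (with $n=1$ the field case).

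It remains to decide which $M_n(\mathbb{F}_q)$ are compatible with the bound. Here the phase is $\psi\big(\operatorname{tr}(\sum_j m_j X_j + m_d\, f(X))\big)$, summed over the $(d-1)n^2$-dimensional space $M_n(\mathbb{F}_q)^{d-1}$, and the key is that a matrix $m_d$ of low rank degenerates the leading part of the phase. I would stratify $M_n(\mathbb{F}_q)$ by rank and show that, once $n$ passes the threshold set by $d$, a low-rank choice of $m_d$ (with the linear coefficients $m_j$ placed in the complementary space) concentrates the sum on a locus of singular matrices large enough that square-root cancellation fails by a growing power of $q$. The admissible thresholds $n \leq 2, 3, 4$ for $d \geq 4$, $d = 3$, $d = 2$ respectively arise from balancing the domain dimension $(d-1)n^2$ against the dimension of the degeneration locus produced by a rank drop in $m_d$.

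For $f = X_1^2 + \dots + X_{d-1}^2$ I would sharpen this to eliminate every matrix ring. When $m_d$ is a unit the phase is the trace form $\operatorname{tr}(m_d\sum_i X_i^2)$, whose polar form $2\operatorname{tr}(m_d XY)$ is nondegenerate, so that character gives honest square-root cancellation and the failure must come from singular $m_d$. Taking, for instance, $m_d = E_{11}$ on $M_2(\mathbb{F}_q)$ turns each of the $d-1$ quadratic factors into $a_i^2 + b_i c_i$ in the matrix entries, a form of rank $3$ on $\mathbb{F}_q^4$, so each factor has size $q^{5/2}$ against the admissible $q^2$ and the total beats square-root cancellation by $q^{(d-1)/2} \to \infty$; hence no matrix ring survives and only finite fields remain. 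The main obstacle throughout is precisely this matrix-ring step: whereas the radical case reduces to a clean lifting/stationary-phase computation over $\mathbb{Z}/p^k$, the matrix sums demand uniform control of exponential sums across the entire rank stratification of $M_n(\mathbb{F}_q)$ and the extraction of sharp dimensional thresholds, which is where the $2\times 2$/$3\times 3$/$4\times 4$ trichotomy in the statement genuinely comes from.
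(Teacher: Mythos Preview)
The paper does not contain a proof of this theorem. It is quoted in the introduction purely as background, with an explicit citation to \cite{Kings1}, Theorems 2.27 and 3.4 (and a pointer to \cite{IoMPak} and \cite{Kings2} for related results); nothing further is said about it and no argument for it appears anywhere in the paper. So there is no ``paper's own proof'' to compare your proposal against.

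For what it is worth, your outline follows the natural shape one would expect for such a structure theorem: reduce via the Chinese Remainder decomposition to indecomposable finite rings, kill off rings with nonzero Jacobson radical by choosing a character coefficient in the radical so that the phase is constant on large cosets, and then analyse $M_n(\mathbb{F}_q)$ by picking a low-rank $m_d$ and tracking the resulting degeneration of the quadratic (or higher) phase. The $M_2(\mathbb{F}_q)$ paraboloid computation you give (the form $a^2+bc$ of rank $3$ on $\mathbb{F}_q^4$, hence a Gauss sum of size $q^{5/2}$ against the allowed $q^2$) is correct and is exactly the kind of witness one needs. What is genuinely missing from your sketch, and what presumably occupies the bulk of \cite{Kings1}, is the general matrix-ring step: you assert that the thresholds $n\le 2,3,4$ for $d\ge 4,\,d=3,\,d=2$ come from ``balancing the domain dimension $(d-1)n^2$ against the dimension of the degeneration locus,'' but you do not actually carry out that balance or exhibit the resonant character for a general polynomial $f$, and it is not obvious that a single rank-one choice of $m_d$ suffices uniformly in $f$. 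If you want a self-contained proof you would need to make that step precise; otherwise, the honest move is simply to cite \cite{Kings1} as the paper does.
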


These considerations led Iosevich to ask the author whether the same is true for restriction exponents: if, for a sequence of finite rings of size tending to infinity, there exists some uniform constant $C$ such that the parabola satisfies an $s = 2, r = \frac{4}{3}$ restriction estimate, must the rings eventually be fields? In other words, can an optimal restriction estimate for the parabola hold over a ring that's not a field? Iosevich was particularly interested in the case of $\mathbb{Z}/N\mathbb{Z}$ for $N$ squarefree, and especially in the case $N = pq$: is the largest exponent for which such a uniform estimate holds $\frac{6}{5}$, $\frac{4}{3}$, or something intermediate between them? In this note, we answer this ``fields only'' question in the negative: in the case $N=pq$, and more generally when $N$ has a bounded number of prime factors, we obtain a restriction estimate with an exponent of $4/3$ and a uniform constant. For arbitrary squarefree $N$, we obtain the corresponding Hickman--Wright-style $N^\epsilon$-loss estimate for the same exponent $4/3$. In particular, we will prove the following theorem: 

\begin{theorem} \label{th:main}
    For any squarefree $N$, the parabola $\Sigma$ satisfies the following restriction estimate: for any $f:(\mathbb{Z}/N\mathbb{Z})^2\rightarrow\mathbb{C},$
    $$\left(\frac{1}{|\Sigma|}\sum\limits_{m\in\Sigma}|\widehat{f}(m)|^2 \right)^{\frac{1}{2}}\leq 2^{\frac{\omega(N)}{4}}\cdot N^{-1}\left(\sum\limits_{x\in (\mathbb{Z}/N\mathbb{Z})^2}|f(x)|^\frac{4}{3}\right)^\frac{3}{4},$$
where $\omega(N)$ denotes the number of (distinct) prime divisors of $N$.
\end{theorem}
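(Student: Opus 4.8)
The plan is to pass to the dual extension formulation and exploit that the exponent $4$ appearing there is an even integer, so that the extension estimate reduces to a bound on the additive energy of the parabola. By duality, writing $R:\ell^{4/3}\to L^2(\mu)$ for the restriction operator with $\mu$ the normalized counting measure on $\Sigma$ and computing its adjoint, Theorem~\ref{th:main} is equivalent to the extension bound
$$\Big\|\sum_{m\in\Sigma}g(m)e^{2\pi i x\cdot m}\Big\|_{\ell^4_x}\leq 2^{\omega(N)/4}\,N^{1/2}\Big(\sum_{m\in\Sigma}|g(m)|^2\Big)^{1/2}$$
for all $g:\Sigma\to\mathbb{C}$. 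Writing $G(x)$ for the extension on the left and expanding $\|G\|_4^4=\sum_x|G(x)|^4$, orthogonality of the characters collapses the sum to
$$\|G\|_4^4=N^2\sum_{\substack{m_1+m_2=m_3+m_4\\ m_i\in\Sigma}}g(m_1)g(m_2)\overline{g(m_3)g(m_4)}=:N^2 S,$$
so it suffices to show that the quadrilinear energy sum $S$ is bounded by $2^{\omega(N)}\big(\sum_m|g(m)|^2\big)^2$.

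Parametrizing $m_i=(t_i,t_i^2)$, the constraint $m_1+m_2=m_3+m_4$ becomes the pair of congruences $t_1+t_2\equiv t_3+t_4$ and $t_1^2+t_2^2\equiv t_3^2+t_4^2\pmod N$. I would analyze the solution set $\mathcal S\subseteq(\mathbb Z/N\mathbb Z)^4$ one prime at a time via the Chinese Remainder Theorem, $\mathcal S=\prod_{p\mid N}\mathcal S_p$. The key algebraic observation is that at each prime the local solution set is a union of the graphs of exactly two bijections of $\mathbb F_p^2$: for odd $p$, since $2$ is invertible the two equations force $t_1t_2\equiv t_3t_4$, whence $\{t_1,t_2\}=\{t_3,t_4\}$, and the two bijections are the identity and the coordinate swap; for $p=2$ the parabola degenerates (as $t^2=t$) but a direct check shows $\mathcal S_2$ is again a union of two bijection-graphs. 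Recombining over the primes by CRT, $\mathcal S=\bigcup_{\vec a\in\{1,2\}^{\omega(N)}}\operatorname{graph}(\sigma_{\vec a})$, a union of $2^{\omega(N)}$ graphs of bijections $\sigma_{\vec a}$ of $(\mathbb Z/N\mathbb Z)^2$, obtained by independently choosing the identity or the swap at each prime factor.

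With this structure in hand the bound on $S$ is immediate. For each fixed $(t_1,t_2)$ the admissible partners $(t_3,t_4)$ are exactly $\{\sigma_{\vec a}(t_1,t_2):\vec a\}$, at most $2^{\omega(N)}$ of them, so the triangle inequality gives $|S|\leq\sum_{\vec a}\sum_{(t_1,t_2)}h(t_1,t_2)\,h(\sigma_{\vec a}(t_1,t_2))$ with $h(t_1,t_2)=|g((t_1,t_1^2))|\,|g((t_2,t_2^2))|$. For each $\vec a$, since $\sigma_{\vec a}$ is a bijection, Cauchy--Schwarz bounds the inner sum by $\sum_{(t_1,t_2)}h(t_1,t_2)^2=\big(\sum_m|g(m)|^2\big)^2$. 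Summing over the $2^{\omega(N)}$ choices of $\vec a$ yields $|S|\leq 2^{\omega(N)}\big(\sum_m|g(m)|^2\big)^2$, which is exactly the required bound and produces the sharp constant $2^{\omega(N)/4}$ after taking fourth roots and dualizing back.

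I expect the main obstacle to be the local analysis, and in particular the degenerate prime $p=2$: there the two defining equations are no longer independent (both reduce to $t_1+t_2\equiv t_3+t_4$), so the clean ``equal sum and product'' argument fails and one must instead verify by hand that $\mathcal S_2$ still splits as two bijection-graphs. A secondary point requiring care is the CRT recombination, so that the count of bijections is exactly $2^{\omega(N)}$ --- neither more (which would spoil the constant) nor relying on the $\sigma_{\vec a}$ being distinct, since overlaps, such as on the diagonal for odd $p$, only help the upper bound.
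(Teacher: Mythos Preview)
Your argument is correct. Both your proof and the paper's rest on the same combinatorial fact---that for squarefree $N$ each pair $(t_1,t_2)$ on the parabola has at most $2^{\omega(N)}$ additive partners $(t_3,t_4)$, obtained via CRT from the local ``identity or swap'' structure at odd primes (your bijection-graph decomposition and the paper's square-root count are two phrasings of this). The packaging differs: the paper invokes a black-box theorem of Iosevich--Mayeli, which converts a uniform additive-energy bound $E(U)\le \Lambda_{\mathrm{energy}}|U|^2$ for \emph{every} subset $U\subset\Sigma$ into the $L^{4/3}\to L^2$ restriction estimate, and then proves that energy bound directly by the square-root count; you instead dualize to the extension side, expand the $L^4$ norm, and handle the resulting weighted energy sum by Cauchy--Schwarz along each of the $2^{\omega(N)}$ bijection graphs. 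Your route is more self-contained (it does not need the uniform-over-subsets hypothesis, only the $\ell^2$-weighted version) and makes the role of the exponent $4$ transparent, while the paper's route isolates the combinatorics as a clean standalone lemma reusable for other applications. Your caution about $p=2$ is well placed: there the identity and swap do not cover $\mathcal S_2$, but, as you note, two other bijections (for instance the identity and $(t_1,t_2)\mapsto(t_1+1,t_2+1)$) do.
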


Along the family $N = pq$ for $p$ and $q$ distinct primes, the constant does not grow with $N$. For general squarefree $N$, the worst-case growth of $\omega(N)$ occurs in the case where $N$ is a primorial, in which case $\omega(N)$ is on the order of 

$$\frac{\log(N)}{\log\log(N)}$$

by the Prime Number Theorem; this implies that for any $\epsilon > 0$, there is a constant $C_\epsilon$ such that for any squarefree $N$ and any $f:(\mathbb{Z}/N\mathbb{Z})^2\rightarrow\mathbb{C},$

$$\left(\frac{1}{|\Sigma|}\sum\limits_{m\in\Sigma}|\widehat{f}(m)|^2 \right)^{\frac{1}{2}}\leq C_\epsilon N^\epsilon \cdot N^{-1}\left(\sum\limits_{x\in (\mathbb{Z}/N\mathbb{Z})^2}|f(x)|^\frac{4}{3}\right)^\frac{3}{4},$$

as indeed there is an absolute constant $C$ such that for any squarefree $N$ and any $f:(\mathbb{Z}/N\mathbb{Z})^2\rightarrow\mathbb{C},$

$$\left(\frac{1}{|\Sigma|}\sum\limits_{m\in\Sigma}|\widehat{f}(m)|^2 \right)^{\frac{1}{2}}\leq CN^{\frac{1}{4\log\log(N)}}\cdot N^{-1}\left(\sum\limits_{x\in (\mathbb{Z}/N\mathbb{Z})^2}|f(x)|^\frac{4}{3}\right)^\frac{3}{4}.$$

\subsection{Applications to Uncertainty Principles and Signal Recovery}
As an application of the above estimates, we discuss some uncertainty principles and signal recovery results for the parabola, which illustrate the strength of our main result (readers not interested in these applications can skip to Section \ref{MainProofSect} with no loss of continuity). In \cite{IoMayeli}, Iosevich and Mayeli proved the following uncertainty principle. 

\begin{theorem}\label{th:iosevichmayelirestriction}
Suppose that $f: ({\mathbb Z}/N\mathbb Z)^d\to \mathbb C$ is supported in $E \subset ({\mathbb Z}/N\mathbb{Z})^d$, and $\widehat{f}:(\mathbb Z/N\mathbb{Z})^d\to \mathbb C$ is supported in $\Sigma \subset ({\mathbb Z}/N\mathbb{Z})^d$. Suppose that the restriction estimate 
    \begin{equation} \label{eq:mamarestriction} \left(\frac{1}{|\Sigma|}\sum\limits_{m\in\Sigma}|\widehat{f}(m)|^s \right)^{\frac{1}{s}}\leq C_{r,s} N^{-d/2}\left(\sum\limits_{x\in (\mathbb{Z}/N\mathbb{Z})^d}|f(x)|^r\right)^\frac{1}{r},\end{equation}
holds for $\Sigma$ for a pair $(r,s)$, $1\leq r\leq s$. If $1 \leq r \leq 2 \leq s$, then  
\begin{equation} \label{eq:iosevichmayeliUP} {|E|}^{\frac{2-r}{r}} \cdot |\Sigma| \ge \frac{N^d}{C^2_{r,s}}. \end{equation} 
\end{theorem} 

\vskip.125in 

Plugging in $d=2$, $\Sigma=\{(t,t^2): t \in {\mathbb Z}/N\mathbb{Z}\}$, $s=2$, and $r=\frac{6}{5}$ (the Hickman--Wright exponent\footnote{As this discussion serves as motivation, we are here suppressing the factors of $N^\epsilon$ and the dependence of our constant on $\epsilon$. As with our main result, this is literally true when $N$ has a bounded number of prime factors; for arbitrary $N$, the suppressed factors worsen the exponents by an arbitrarily small amount.}), we see that if $f: ({\mathbb Z}/N\mathbb{Z})^2 \to {\mathbb C}$ is a signal supported in $E$, with $\widehat{f}$ supported in $\Sigma$, then 
$$ {|E|}^{\frac{2}{3}} \ge \frac{N}{C^2_{6/5,2}},$$ or, equivalently, 
\begin{equation}\label{eq:HWUP} |E| \geq \frac{N^{\frac{3}{2}}}{C^3_{6/5,2}}. \end{equation} 
In contrast, Theorem \ref{th:main} implies a much stronger uncertainty principle when $N$ is squarefree. Indeed, plugging the conclusion of Theorem \ref{th:main} into Theorem \ref{th:iosevichmayelirestriction}\footnote{Using constant $C_{r, s} = C_\epsilon N^\epsilon$.} shows that if $f: ({\mathbb Z}/N\mathbb{Z})^2 \to {\mathbb C}$, $N$ squarefree, supported in $E$, with $\widehat{f}$ supported in the parabola, then for any $\epsilon>0$, there exists $C_{\epsilon}>0$ such that
$${|E|}^{\frac{1}{2}} \ge \frac{N^{1-2\epsilon}}{C^2_{\epsilon}},$$ 
or, equivalently, 
\begin{equation} \label{eq:improvedUP} |E| \ge \frac{N^{2-4\epsilon}}{C_{\epsilon}^4}.\end{equation} 

In the case when the number of prime divisors of $N$ is a fixed small number, the constant in (\ref{eq:improvedUP}) is absolute and does not grow with $N$, and we can take $\epsilon = 0$. 

\vskip.125in 

Iosevich and Mayeli (\cite{IoMayeli}) also showed that improved uncertainty principles can be used, via the method due to Matolcsi and Szucs (\cite{MS73}), and, independently, Donoho and Stark (\cite{DS89}), to obtain signal recovery results with less restrictive conditions. Suppose that $f: (\mathbb{Z}/N\mathbb{Z})^2 \to {\mathbb C}$, $N$ squarefree, supported in $E$, and the frequencies ${\{\widehat{f}(m)\}}_{m \in \Sigma}$ are unobserved, where $\Sigma$ is the parabola. If $f$ cannot be recovered uniquely and exactly, then there exists $g: (\mathbb{Z}/N\mathbb{Z})^2 \to {\mathbb C}$ supported in $F$ such that $|F|=|E|$, and $\widehat{f}(m)=\widehat{g}(m)$ for $m \notin \Sigma$. Let $h=f-g$. Then $|\operatorname{supp}(h)| \leq 2|E|$, and $|\operatorname{supp}(\widehat{h})| \leq N$, the size of the parabola. By (\ref{eq:improvedUP}), 
$$ |E| \ge \frac{N^{2-4\epsilon}}{2C^4_{\epsilon}}.$$

It follows that if we assume that 
$$ |E|<\frac{N^{2-4\epsilon}}{2C^4_{\epsilon}},$$ then we obtain a contradiction, which implies that $h \equiv 0$, and in particular exact and unique recovery. The method of recovery, as pointed out by Donoho and Stark, is suggested by the proof: we can take 
$$ \operatorname*{argmin}_{u: |\operatorname{supp}(u)|=|E|} {\|\widehat{f}-\widehat{u}\|}_2, $$ where the sum implicit in the $L^2$ norm is restricted to frequencies in $\Sigma^c$, i.e., the method of least squares. 

This algorithm is rather inefficient, and we are going to see that Theorem \ref{th:main}, properly interpreted, allows us to use a more efficient algorithm based on the celebrated Logan phenomenon (\cite{Logan}). We shall follow the approach of Burstein, Iosevich, Mayeli, and Nathan in \cite{BIMN2025}, which is a slight refinement of the approach of Iosevich, Kashin, Limonova, and Mayeli in \cite{IKLM24}. 

\vskip.125in 

The following result was established in \cite{DS89} using the celebrated Logan phenomenon. 

\begin{theorem} \label{theorem:DSLogan89Orig} (\cite{DS89}, Theorem 8) Let $f: ({\mathbb Z}/N\mathbb{Z})^d \to {\mathbb C}$ be supported in $E \subset ({\mathbb Z}/N\mathbb{Z})^d$. Suppose that $\widehat{f}$ is transmitted but the frequencies ${\{\widehat{f}(m)\}}_{m \in \Sigma}$ are unobserved, where $\Sigma \subset ({\mathbb Z}/N\mathbb{Z})^d$, with $|E| \cdot |\Sigma|<\frac{N^d}{2}$. Then $f$ can be recovered exactly and uniquely. Moreover,
\begin{equation} 
\label{equation: simpleL1recoveryOrig}
f = \operatorname*{argmin}_{g:\widehat{g}(m)=\widehat{f}(m)\text{ for } m \notin \Sigma} {\|g\|}_{L^1\left(({\mathbb Z}/N\mathbb{Z})^d\right)}.
\end{equation} 
\end{theorem}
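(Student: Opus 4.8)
The plan is to show that $f$ itself is the unique minimizer of the $L^1$ norm among all functions agreeing with $f$ on the observed frequencies; the entire argument rests on an $L^1$ concentration (uncertainty) estimate for functions whose Fourier transform is supported in the small set $S$. First I would take an arbitrary competitor $g$ satisfying the constraint $\widehat{g}(m)=\widehat{f}(m)$ for all $m\notin S$, and set $h=g-f$. By linearity of the Fourier transform, $\widehat{h}(m)=0$ for every $m\notin S$, so $\widehat{h}$ is supported in $S$. It then suffices to prove that $\|g\|_1 > \|f\|_1$ whenever $h\neq 0$, which gives both exact recovery and uniqueness at once.

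Second, I would split the $L^1$ norm of $g=f+h$ over $E$ and its complement. Since $f$ vanishes off $E$, on $E^c$ we simply have $f+h=h$, while on $E$ the reverse triangle inequality gives $|f+h|\geq |f|-|h|$. Summing and using that $f$ is supported in $E$ yields
$$\|g\|_1 \geq \|f\|_1 + \|h\|_1 - 2\sum_{x\in E}|h(x)|,$$
so the whole problem reduces to showing that the $L^1$ mass of $h$ on $E$ is strictly less than half its total mass, i.e. $\sum_{x\in E}|h(x)| < \tfrac{1}{2}\|h\|_1$ whenever $h\neq 0$.

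Third, and this is the crux, I would establish the pointwise concentration bound. Because $\widehat{h}$ is supported in $S$, Fourier inversion followed by Cauchy--Schwarz and Plancherel gives $\|h\|_\infty \leq N^{-d/2}|S|^{1/2}\|h\|_2$; interpolating through the elementary inequality $\|h\|_2^2 \leq \|h\|_\infty\|h\|_1$ then yields $\|h\|_2 \leq N^{-d/2}|S|^{1/2}\|h\|_1$, and hence $\|h\|_\infty \leq N^{-d}|S|\,\|h\|_1$. Bounding the sum over $E$ crudely by $|E|\cdot\|h\|_\infty$ produces
$$\sum_{x\in E}|h(x)| \leq \frac{|E|\cdot|S|}{N^d}\,\|h\|_1,$$
which, under the hypothesis $|E|\cdot|S| < N^d/2$, is strictly below $\tfrac{1}{2}\|h\|_1$ for $h\neq 0$.

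Combining the two displays gives $\|g\|_1 \geq \|f\|_1 + \bigl(1 - 2|E||S|/N^d\bigr)\|h\|_1$, and the coefficient of $\|h\|_1$ is strictly positive, so $\|g\|_1 \geq \|f\|_1$ with equality forcing $h=0$, i.e. $g=f$. The only genuine obstacle is the concentration estimate of the third step; everything else is triangle-inequality bookkeeping. I expect the delicate point to be tracking the normalization carefully so that the threshold emerges as $N^d/2$ rather than the weaker $N^d/4$ one would get by applying Cauchy--Schwarz directly to the sum over $E$ instead of routing through the $L^\infty$ bound.
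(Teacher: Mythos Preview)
Your argument is correct and is the standard Donoho--Stark/Logan proof, but note that the paper does not actually give its own proof of this statement: Theorem~\ref{theorem:DSLogan89Orig} is simply quoted from \cite{DS89}, so there is nothing in the paper to compare your proof against directly.

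That said, the template you use---write $g=f+h$, split $\|g\|_1$ over $E$ and $E^c$, and reduce to showing $\|h\|_{L^1(E)}<\tfrac{1}{2}\|h\|_1$---is exactly the skeleton the paper employs in Section~4 to prove the related Theorem~\ref{th:loganparabola}. The difference lies in the concentration step: the paper bounds $\|h\|_{L^1(E)}\le |E|^{1/2}\|h\|_2$ and then invokes the restriction-derived inequality \eqref{eq:talagrandparabola} to control $\|h\|_2$ by $\|h\|_1$, whereas you (correctly, for the Donoho--Stark setting) route through the pointwise bound $\|h\|_\infty\le N^{-d}|S|\,\|h\|_1$ obtained by iterating Cauchy--Schwarz and Plancherel. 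Your remark that this $L^\infty$ route is what yields the sharp threshold $N^d/2$ rather than $N^d/4$ is accurate and worth keeping.
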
 

The algorithm implicit in this result is very efficient, with runtime roughly $O(N^2)$. If we apply this result directly to the parabola, the condition needed for the algorithm to yield the original signal $f$ is 
$$ |E|<\frac{N}{2}.$$

To obtain a less restrictive condition under which the algorithm (\ref{equation: simpleL1recoveryOrig}) yields $f$, we need to reformulate Theorem \ref{th:main} a bit. 

\begin{theorem} \label{th:maindual} Suppose $N$ is squarefree. Then given any $\epsilon>0$, there exists $C_{\epsilon}>0$ such that if $f: ({\mathbb Z}/N\mathbb{Z})^2\rightarrow\mathbb{C}$ has $\widehat{f}$ supported in the parabola, then 
\begin{equation} \label{eq:parabolabourgain} {\left( \frac{1}{N^2} \sum_{x \in {({\mathbb Z}/N\mathbb{Z})^2}} {|f(x)|}^4 \right)}^{\frac{1}{4}} \leq C_{\epsilon} N^{\epsilon} {\left( \frac{1}{N^2} \sum_{x \in ({\mathbb Z}/N\mathbb{Z})^2} {|f(x)|}^2 \right)}^{\frac{1}{2}}. 
\end{equation} 
\end{theorem}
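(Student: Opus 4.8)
The plan is to deduce Theorem \ref{th:maindual} from Theorem \ref{th:main} via the standard duality between a restriction estimate and its associated extension estimate, together with Plancherel's theorem. The observation driving the argument is that Theorem \ref{th:main} is precisely the statement that the restriction map $Rf = \widehat{f}\,|_{\Sigma}$ is bounded from $\ell^{4/3}$ into $L^2(\Sigma,d\sigma)$, where $d\sigma$ is the normalized counting measure on $\Sigma$; and the hypothesis that $\widehat{f}$ is supported on $\Sigma$ says exactly that $f$ arises as an extension, i.e.\ lies in the range of the adjoint operator $R^\ast$. Thus bounding $\|f\|_4$ by $\|f\|_2$ is nothing other than the dual extension estimate applied to $g = \widehat{f}\,|_{\Sigma}$.

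Concretely, first I would fix the probability measure $\mu = N^{-2}\sum_x \delta_x$ on $\mathbb{Z}_N^2$ and restate Theorem \ref{th:main}, after absorbing normalizations, as the operator bound $\|Rf\|_{L^2(d\sigma)} \leq C_\epsilon N^{\epsilon} N^{1/2}\|f\|_{L^{4/3}(\mu)}$. Taking adjoints with respect to the pairings $\langle\cdot,\cdot\rangle_{L^2(\mu)}$ and $\langle\cdot,\cdot\rangle_{L^2(d\sigma)}$, and using $(4/3)'=4$ together with the self-duality of $L^2(d\sigma)$, gives $\|R^\ast g\|_{L^4(\mu)} \leq C_\epsilon N^{\epsilon} N^{1/2}\|g\|_{L^2(d\sigma)}$ with the same constant. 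A direct computation identifies the adjoint as the extension operator $(R^\ast g)(x) = \tfrac{N}{|\Sigma|}\sum_{m\in\Sigma} g(m)\,e^{2\pi i x\cdot m}$.

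Next I would set $g = \widehat{f}\,|_{\Sigma}$. Since $\widehat{f}$ vanishes off $\Sigma$ and $|\Sigma|=N$, Fourier inversion yields $R^\ast g = N f$, so the adjoint bound reads $N\|f\|_{L^4(\mu)} \leq C_\epsilon N^{\epsilon} N^{1/2}\|g\|_{L^2(d\sigma)}$. Finally, Plancherel together with the support hypothesis gives $\sum_{m\in\Sigma}|\widehat{f}(m)|^2 = \sum_{m}|\widehat{f}(m)|^2 = \sum_x |f(x)|^2$, hence $\|g\|_{L^2(d\sigma)} = N^{1/2}\|f\|_{L^2(\mu)}$; substituting and cancelling the common factor of $N$ produces exactly (\ref{eq:parabolabourgain}).

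I do not anticipate a genuine conceptual obstacle: the argument is a formal consequence of $L^p$-duality once the operators are set up correctly. The only step demanding real care --- and the one most likely to introduce errors --- is the bookkeeping of normalizing factors: the $N^{-1}$ in the definition of $\widehat{f}$, the passage between the unnormalized $\ell^p$ norms of Theorem \ref{th:main} and the probability-measure norms $L^p(\mu)$ in (\ref{eq:parabolabourgain}), and the measure $d\sigma$ with $|\Sigma|=N$. I would therefore carry out each conversion explicitly and check at the end that the powers of $N$ cancel on both sides, so that only the harmless $C_\epsilon N^{\epsilon}$ survives; in the regime where $\omega(N)$ is bounded this even yields an absolute constant with $\epsilon=0$, consistent with the remark following Theorem \ref{th:main}.
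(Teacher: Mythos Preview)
Your proposal is correct and is essentially the same duality argument as the paper's: the paper carries it out concretely by writing $\sum_x |f(x)|^4 = \langle f,\, f|f|^2\rangle$, passing to the Fourier side via the support hypothesis, applying Cauchy--Schwarz, and then invoking the restriction estimate of Theorem~\ref{th:main} on $h=\overline{f|f|^2}$, which is exactly the test-function realization of the adjoint bound $\|R^\ast\|_{L^2(d\sigma)\to L^4}=\|R\|_{L^{4/3}\to L^2(d\sigma)}$ that you cite abstractly. Your normalization tracking is correct, and the two presentations amount to the same computation.
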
 

This result is dual to Theorem \ref{th:main}, and the proof is essentially just duality; for completeness we include it below. We shall need the following lemma from \cite{BIMN2025}, which follows from H\"older's inequality. 

\begin{lemma} \label{lemma:vershynintrick} (\cite{BIMN2025}, Lemma 1.22) Suppose that for $h: (\mathbb{Z}/N\mathbb{Z})^d \to {\mathbb C}$ with $\widehat{h}$ supported in $S \subset (\mathbb{Z}/N\mathbb{Z})^d$, 
\begin{equation} \label{eq:bourgainproxy} {\left( \frac{1}{N^d} \sum_{x \in (\mathbb{Z}/N\mathbb{Z})^d} {|h(x)|}^r \right)}^{\frac{1}{r}} \leq C(r) {\left( \frac{1}{N^d} \sum_{x \in (\mathbb{Z}/N\mathbb{Z})^d} {|h(x)|}^2 \right)}^{\frac{1}{2}} \end{equation} for some $r>2$. 

Then 
$$ {\left( \frac{1}{N^d} \sum_{x \in (\mathbb{Z}/N\mathbb{Z})^d} {|h(x)|}^2 \right)}^{\frac{1}{2}} \leq {(C(r))}^{\frac{r}{r-2}} \cdot \frac{1}{N^d} \sum_{x \in (\mathbb{Z}/N\mathbb{Z})^d} |h(x)|. $$
\end{lemma}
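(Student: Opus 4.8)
The plan is to recognize all three quantities in the lemma as $L^p$ norms with respect to the normalized counting measure $d\mu = N^{-d}\sum_{x}\delta_x$, which is a probability measure on $\mathbb{Z}_N^d$, and then to interpolate the $L^2$ norm between the $L^1$ and $L^q$ norms. Writing $\|h\|_p = \bigl(N^{-d}\sum_{x}|h(x)|^p\bigr)^{1/p}$, the hypothesis (\ref{eq:bourgainproxy}) reads $\|h\|_q \le C(q)\|h\|_2$ and the desired conclusion reads $\|h\|_2 \le C(q)^{q/(q-2)}\|h\|_1$. Note that the Fourier-support hypothesis on $\widehat h$ plays no role in the implication itself; it is present only to make the input estimate a meaningful restriction-type bound, so I would ignore it for the purposes of this lemma.

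First I would fix the interpolation exponent. Since $1 < 2 < q$, there is a unique $\theta \in (0,1)$ with $\tfrac12 = \tfrac{\theta}{1} + \tfrac{1-\theta}{q}$; solving gives $\theta = \tfrac{q-2}{2(q-1)}$ and $1-\theta = \tfrac{q}{2(q-1)}$. The log-convexity of $L^p$ norms—itself just Hölder's inequality applied to the splitting $|h|^2 = |h|^{2\theta}\,|h|^{2(1-\theta)}$ with conjugate exponents $\tfrac{1}{2\theta}$ and $\tfrac{1}{1-2\theta}$ (both exceed $1$ since $0<2\theta<1$ for $q>2$)—then yields the interpolation estimate $\|h\|_2 \le \|h\|_1^{\theta}\,\|h\|_q^{1-\theta}$.

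Next I would substitute the hypothesis into the $\|h\|_q$ factor to close the argument. This gives $\|h\|_2 \le \|h\|_1^{\theta}\bigl(C(q)\|h\|_2\bigr)^{1-\theta} = C(q)^{1-\theta}\,\|h\|_1^{\theta}\,\|h\|_2^{1-\theta}$. If $h \equiv 0$ the conclusion holds trivially; otherwise $\|h\|_2 > 0$, so I divide both sides by $\|h\|_2^{1-\theta}$ to obtain $\|h\|_2^{\theta} \le C(q)^{1-\theta}\|h\|_1^{\theta}$, i.e. $\|h\|_2 \le C(q)^{(1-\theta)/\theta}\|h\|_1$. A final exponent check, $\tfrac{1-\theta}{\theta} = \tfrac{q/(2(q-1))}{(q-2)/(2(q-1))} = \tfrac{q}{q-2}$, recovers exactly the constant claimed.

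I do not expect any genuine obstacle here: the entire content is a single application of Hölder's inequality together with exponent bookkeeping. The only points demanding care are solving correctly for $\theta$ and verifying $(1-\theta)/\theta = q/(q-2)$, and separating out the degenerate case $h\equiv 0$ before dividing by $\|h\|_2^{1-\theta}$. This self-improving structure—an $L^q$-over-$L^2$ bound upgrading to an $L^2$-over-$L^1$ bound—is a standard maneuver, and normalizing by $N^d$ so that $\mu$ is a probability measure is precisely what keeps the resulting constant $C(q)^{q/(q-2)}$ free of any power of $N$.
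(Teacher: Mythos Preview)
Your argument is correct and matches the paper's approach: the paper does not spell out a proof but simply states that the lemma ``follows from H\"older's inequality,'' which is exactly the log-convexity interpolation $\|h\|_2 \le \|h\|_1^{\theta}\|h\|_q^{1-\theta}$ you carried out. Your bookkeeping for $\theta$ and the final exponent $(1-\theta)/\theta = q/(q-2)$ is accurate, and you are right that the Fourier-support hypothesis is irrelevant to the implication itself.
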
 

It follows from Lemma \ref{lemma:vershynintrick} that under the assumptions of Theorem \ref{th:maindual}, 
\begin{equation} \label{eq:talagrandparabola} {\left( \frac{1}{N^2} \sum_{x \in (\mathbb{Z}/N\mathbb{Z})^2} {|f(x)|}^2 \right)}^{\frac{1}{2}} \leq {\left( C_{\epsilon} N^{\epsilon} \right)}^2 \cdot \frac{1}{N^2} \sum_{x \in (\mathbb{Z}/N\mathbb{Z})^2} |f(x)|. \end{equation} 

This leads us to the main result of this subsection. 

\begin{theorem} \label{th:loganparabola} Let $f: (\mathbb{Z}/N\mathbb{Z})^2 \to {\mathbb C}$, where $N$ is squarefree. Suppose that $f$ is supported in $E$ and the frequencies ${\{\widehat{f}(m)\}}_{m \in \Sigma}$ are unobserved, where $\Sigma$ is the parabola in $(\mathbb{Z}/N\mathbb{Z})^2$. Then for any $\epsilon>0$ there exists $C_{\epsilon}>0$ such that if 
$$ |E|<\frac{1}{4} \cdot N^{2-4 \epsilon} \cdot \frac{1}{C^4_{\epsilon}},$$ then $f$ can be recovered via Logan's algorithm 
$$ f = \operatorname*{argmin}_{u: \widehat{u}(m)=\widehat{f}(m) \ \text{\emph{for}} \ m \notin \Sigma} {\|u\|}_1.$$
\end{theorem}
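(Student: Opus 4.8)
The plan is to run the standard Logan--Donoho--Stark duality argument, taking the reverse inequality \eqref{eq:talagrandparabola} as the single analytic input. Suppose toward a contradiction that $f$ is \emph{not} returned by the program $\arg\min_u \|u\|_1$ subject to $\widehat{u}(m)=\widehat{f}(m)$ for $m\notin S$. Since $f$ is itself feasible, every minimizer $u$ satisfies $\|u\|_1\le\|f\|_1$; failure of recovery means there is such a $u\neq f$. Set $h=f-u$. The constraints force $\widehat{h}(m)=\widehat{f}(m)-\widehat{u}(m)=0$ for all $m\notin S$, so $\widehat{h}$ is supported on the parabola $S$. This is precisely the situation in which \eqref{eq:talagrandparabola} applies, so $h$ is admissible as the test function there.

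First I would extract the concentration (``Logan'') step. Splitting $\|u\|_1$ over $E$ and its complement, using $|u|\ge|f|-|h|$ on $E$ (reverse triangle inequality) and $u=-h$ off $E$ (where $f$ vanishes), one obtains $\|u\|_1\ge\|f\|_1+\|h\|_1-2\sum_{x\in E}|h(x)|$. Combining with $\|u\|_1\le\|f\|_1$ yields $\sum_{x\in E}|h(x)|\ge\frac12\|h\|_1$; that is, at least half of the $\ell^1$-mass of the difference $h$ is carried by the support $E$ of $f$.

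Next I would feed this into Cauchy--Schwarz on $E$, namely $\sum_{x\in E}|h(x)|\le|E|^{1/2}\|h\|_2$, to obtain $\frac12\|h\|_1\le|E|^{1/2}\|h\|_2$. On the other side, rewriting \eqref{eq:talagrandparabola} in unnormalized norms gives $\|h\|_2\le(C_\epsilon N^\epsilon)^2 N^{-1}\|h\|_1$, equivalently $\|h\|_1\ge N(C_\epsilon N^\epsilon)^{-2}\|h\|_2$. If $h\neq0$ then $\|h\|_2>0$, so chaining the two inequalities and cancelling $\|h\|_2$ produces $|E|^{1/2}\ge\frac12 N(C_\epsilon N^\epsilon)^{-2}$, i.e. $|E|\ge\frac14 N^{2-4\epsilon}C_\epsilon^{-4}$. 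This contradicts the hypothesis $|E|<\frac14 N^{2-4\epsilon}C_\epsilon^{-4}$, forcing $h=0$ and hence $u=f$. This simultaneously shows that $f$ is the unique minimizer and that the program recovers it.

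The argument is essentially bookkeeping once \eqref{eq:talagrandparabola} is in hand, so I do not anticipate a genuine obstacle; the only points demanding care are the tracking of the normalization powers of $N$ (so as to land exactly on the exponent $2-4\epsilon$ and the constant $\frac14$) and the verification that $\widehat{h}$ is supported on $S$, which is what legitimizes applying \eqref{eq:talagrandparabola} to $h$ in the first place.
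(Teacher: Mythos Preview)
Your argument is correct and matches the paper's proof essentially line for line: both set $h$ to be the difference between $f$ and a putative minimizer, observe $\widehat{h}$ is supported on $S$, combine the Logan splitting $\|u\|_1\ge\|f\|_1+\|h\|_{L^1(E^c)}-\|h\|_{L^1(E)}$ with Cauchy--Schwarz on $E$ and the reverse inequality \eqref{eq:talagrandparabola}, and arrive at the same threshold $|E|\ge\tfrac14 N^{2-4\epsilon}C_\epsilon^{-4}$. The only cosmetic difference is that the paper phrases the contradiction as $\|h\|_{L^1(E)}<\|h\|_{L^1(E^c)}$ whereas you phrase it as $\|h\|_{L^1(E)}\ge\tfrac12\|h\|_1$, which are equivalent.
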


As before, when $N$ has a bounded number of prime factors, the constant $C_\epsilon$ may be taken to be absolute and we may take $\epsilon = 0$.  

\section{Proof of Main Theorem}\label{MainProofSect}
The key tool in the proof of our main theorem is the following ``universal restriction estimate'' of Iosevich and Mayeli (\cite{IoMayeli}, Theorem 3.12), which we state under our normalization of the Fourier transform.

\begin{theorem}\label{th:IoMayeli} (\cite{IoMayeli}, Theorem 3.12)
    Let $\Sigma \subset (\mathbb{Z}/N\mathbb{Z})^d$ have the property that
    $$|\Sigma| = \Lambda_{\text{size}}N^{d/2}$$
    and
    $$|\{(x, y, x', y')\in U^4 : x+y = x' + y'\}| \leq \Lambda_{\text{energy}}\cdot |U|^2$$
    for every $U\subset \Sigma$.
    Then we have the following restriction estimate: for any $f:(\mathbb{Z}/N\mathbb{Z})^d\rightarrow\mathbb{C},$
    $$\left(\frac{1}{|\Sigma|}\sum\limits_{m\in\Sigma}|\widehat{f}(m)|^2 \right)^{\frac{1}{2}}\leq \Lambda_{\text{size}}^{-\frac{1}{2}}\cdot\Lambda_{\text{energy}}^{\frac{1}{4}}\cdot N^{-d/2}\left(\sum\limits_{x\in (\mathbb{Z}/N\mathbb{Z})^d}|f(x)|^\frac{4}{3}\right)^\frac{3}{4}.$$
\end{theorem}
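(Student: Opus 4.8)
The plan is to prove the equivalent dual (extension) estimate and recover the restriction estimate by duality, thereby reducing the whole theorem to a single inequality that transfers the hypothesis from indicator functions to arbitrary weights. Regard the restriction map $Rf=\widehat{f}|_\Sigma$ as an operator from $\ell^{4/3}((\mathbb{Z}/N\mathbb{Z})^d)$ into $L^2(\Sigma,d\sigma)$, where $d\sigma=\frac{1}{|\Sigma|}\sum_{m\in\Sigma}\delta_m$ is the normalized counting measure on $\Sigma$. Since $\ell^4$ is dual to $\ell^{4/3}$, the asserted bound $\|Rf\|_{L^2(d\sigma)}\le A\|f\|_{\ell^{4/3}}$ with $A=\Lambda_{size}^{-1/2}\Lambda_{energy}^{1/4}N^{-d/2}$ is equivalent to the adjoint (extension) estimate $\|R^*g\|_{\ell^4}\le A\|g\|_{L^2(d\sigma)}$. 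A direct computation with the stated normalization identifies the adjoint as the extension operator
\[ R^*g(x)=\frac{N^{-d/2}}{|\Sigma|}\sum_{m\in\Sigma}g(m)e^{2\pi i x\cdot m}. \]
So the first step is simply to record this identity and pass to the extension formulation.

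Next I would square and apply Plancherel. Writing $F=R^*g$ and using $\|F\|_{\ell^4}^2=\|F^2\|_{\ell^2}$, the orthogonality of characters gives $\widehat{F^2}(\xi)=\frac{N^{-d/2}}{|\Sigma|^2}\sum_{m_1+m_2=\xi}g(m_1)g(m_2)$, supported on the sumset $\Sigma+\Sigma$. Plancherel then yields
\[ \|R^*g\|_{\ell^4}^4=\|\widehat{F^2}\|_{\ell^2}^2=\frac{N^{-d}}{|\Sigma|^4}\,E_g,\qquad E_g:=\sum_{m_1+m_2=m_3+m_4}g(m_1)g(m_2)\overline{g(m_3)g(m_4)}=\|g*g\|_{\ell^2}^2, \]
where $E_g$ is the weighted additive energy of $g$. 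Substituting $|\Sigma|=\Lambda_{size}N^{d/2}$ and simplifying, the extension estimate is seen to be exactly equivalent to the clean inequality
\[ E_g\le \Lambda_{energy}\,\|g\|_{\ell^2}^4\quad\text{for every }g:\Sigma\to\mathbb{C}. \]
Because $|g*g|\le |g|*|g|$ pointwise, one has $E_g\le E_{|g|}$, so it suffices to prove this for $g\ge 0$; this is the heart of the matter.

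For an indicator $g=\mathbf{1}_U$ the required inequality is precisely the hypothesis $E(U)\le\Lambda_{energy}|U|^2$; the real task is to upgrade from indicators to arbitrary nonnegative weights with the \emph{same} constant. The natural route is a dyadic decomposition $g=\sum_j g_j$ with $g_j\approx 2^{j}\mathbf{1}_{\Omega_j}$ on its level set $\Omega_j$, combined with the convolution Cauchy–Schwarz inequality $\|g_j*g_k\|_{\ell^2}\le \|g_j*g_j\|_{\ell^2}^{1/2}\|g_k*g_k\|_{\ell^2}^{1/2}$ (immediate from $\widehat{g_j*g_k}=\widehat{g_j}\,\widehat{g_k}$); each diagonal block is then controlled by the hypothesis through $\|g_j*g_j\|_{\ell^2}^2=E_{g_j}\le 2^{4j+O(1)}\Lambda_{energy}|\Omega_j|^2$.

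The main obstacle is the sharp constant. Summing the dyadic blocks through the triangle inequality costs a factor that grows with the number of occupied scales, which would replace $\Lambda_{energy}$ by $\Lambda_{energy}$ times a power of $\log(\|g\|_\infty/\min g)$ and destroy the clean estimate; indeed the inequality $E_g\le\Lambda_{energy}\|g\|_{\ell^2}^4$ asserts that the extremal weight may be taken to be an indicator, which no single lossy application of Cauchy–Schwarz can see. I expect to remove this loss by the tensor-power trick: both sides tensorize, with $E_{g^{\otimes k}}=E_g^{\,k}$ and $\|g^{\otimes k}\|_{\ell^2}=\|g\|_{\ell^2}^{k}$, while the number of dyadic scales of $g^{\otimes k}$ on $\Sigma^{\times k}\subset(\mathbb{Z}/N\mathbb{Z})^{dk}$ grows only polynomially in $k$; applying the lossy bound there and taking $k$-th roots drives the scale-dependent constant to $1$. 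The one point demanding genuine care is to confirm that the uniform energy hypothesis is stable under these products, i.e. that $\sup_{W\subseteq\Sigma^{\times k}}E(W)/|W|^2\le\Lambda_{energy}^{\,k}$, and this is where I would concentrate the detailed verification.
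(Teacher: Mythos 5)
First, note that the paper does not actually prove this statement: it is imported verbatim from Iosevich--Mayeli (\cite{IoMayeli}, Theorem 3.12), so your proposal can only be judged on its own merits. Your first half is fine and is the standard route: duality converts the restriction bound into the extension bound, Plancherel converts the fourth power of the extension into the weighted energy $E_g=\|g*g\|_{\ell^2}^2$, and (after bookkeeping with $|\Sigma|=\Lambda_{size}N^{d/2}$) the theorem is exactly equivalent to the weighted inequality $E_g\leq \Lambda_{energy}\|g\|_{\ell^2}^4$ for all $g:\Sigma\to\mathbb{C}$, reducible to $g\geq 0$. You also correctly identify the crux: upgrading the hypothesis from indicators to weights with no loss in the constant.

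The gap is that this upgrade is \emph{false} with constant exactly $1$, so the tensor-power trick cannot close. Take $\Sigma=\{0,1,2\}\subset\mathbb{Z}/N\mathbb{Z}$ with $N\geq 5$ (no wraparound). Every subset $U\subseteq\Sigma$ satisfies $E(U)\leq\frac{19}{9}|U|^2$: singletons give ratio $1$, pairs give $6/4$, and the full set has $r=(1,2,3,2,1)$, hence $E=19$ against $|U|^2=9$. But the weight $g=(1,\sqrt{3/2},1)$ gives
\begin{equation*}
E_g=s^2+12s+6\Big|_{s=3/2}=\tfrac{105}{4},\qquad \|g\|_{\ell^2}^4=(2+s)^2\Big|_{s=3/2}=\tfrac{49}{4},\qquad \frac{E_g}{\|g\|_{\ell^2}^4}=\frac{15}{7}>\frac{19}{9}.
\end{equation*}
So $\sup_{g\geq 0}E_g/\|g\|_2^4$ strictly exceeds the hereditary indicator constant. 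This kills your plan in two ways. Directly: the clean inequality you reduced to is false as stated. Structurally: your flagged claim $\sup_{W\subseteq\Sigma^{\times k}}E(W)/|W|^2\leq\Lambda_{energy}^k$ must also fail for large $k$, since your own argument (dyadic decomposition with only polynomially many scales in $k$, plus the $k$-th root) would otherwise derive the false weighted inequality from it. What \emph{is} true is that the weighted constant $\lambda_w(\Sigma)=\sup_g E_g/\|g\|_2^4$ tensorizes exactly ($\lambda_w(\Sigma_1\times\Sigma_2)=\lambda_w(\Sigma_1)\lambda_w(\Sigma_2)$, via fibering, Minkowski in the second variable, and the bilinear Cauchy--Schwarz $\|u*v\|_2\leq\|u*u\|_2^{1/2}\|v*v\|_2^{1/2}$ you quote), which is precisely why products of $\Sigma$ contain fibered indicator sets whose energy ratio approaches $\lambda_w(\Sigma)^k$ rather than $\lambda(\Sigma)^k$: the tensor trick run correctly is circular here.

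The salvageable statement, which is what the level-set/dyadic argument (Bourgain, Mockenhaupt--Tao) actually yields, is the same bound with either an absolute multiplicative constant, or with $\|g\|_{L^{2,1}}$ in place of $\|g\|_{L^2}$ (restricted strong type): layer cake plus the bilinear Cauchy--Schwarz gives $\|g*g\|_2\leq\Lambda_{energy}^{1/2}\left(\int_0^\infty|\{g>t\}|^{1/2}\,dt\right)^2$. Any such version suffices for every application in this paper, since Theorem \ref{th:main} is only used through Corollaries \ref{NBoundedFactors} and \ref{GeneralN}, which already tolerate constant factors and $N^\epsilon$ losses. Your counterexample-sensitive insistence on the sharp constant also shows that the statement as quoted, with constant exactly $\Lambda_{size}^{-1/2}\Lambda_{energy}^{1/4}$ under the subset-energy hypothesis, should be read up to an absolute factor (or with the energy hypothesis phrased for weights), since by the exact duality you set up it is equivalent to the weighted inequality refuted above.
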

We will apply the above theorem to the parabola $\Sigma = \{(t, t^2):t\in\mathbb{Z}/N\mathbb{Z}\}$, which clearly has $\Lambda_{\text{size}} = 1.$ For \textit{squarefree} $N$, we can bound the additive energy as follows:
\begin{lemma}
    Suppose $N$ is squarefree, and let $\Sigma$ denote the parabola in $(\mathbb{Z}/N\mathbb{Z})^2$ as above. Then for any $U\subset\Sigma$, we have:
    $$E(U) \leq 2^{\omega(N)}|U|^2$$
    where $E(U)$ denotes the additive energy of $U$ and $\omega(N)$ denotes the number of (distinct) prime divisors of $N$.
\end{lemma}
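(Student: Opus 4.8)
The plan is to parametrize $U$ by the set $A = \{t \in \mathbb{Z}/N\mathbb{Z} : (t,t^2) \in U\}$ of its first coordinates; since $t \mapsto (t,t^2)$ is injective, $|A| = |U|$. Writing each point of $U$ as $(t,t^2)$, the defining relation $x + y = x' + y'$ of an additive-energy quadruple becomes the pair of equations $a + b = c + d$ and $a^2 + b^2 = c^2 + d^2$ in variables $a,b,c,d \in A$. Thus $E(U)$ simply counts such quadruples, and I would bound it by fixing $(c,d) \in A^2$ and showing that the number of $(a,b)$ solving both equations is at most $2^{\omega(N)}$; summing over the $|A|^2$ choices of $(c,d)$ then yields $E(U) \le 2^{\omega(N)}|A|^2 = 2^{\omega(N)}|U|^2$.

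To count the admissible $(a,b)$ I would invoke the Chinese Remainder Theorem: since $N$ is squarefree, $\mathbb{Z}/N\mathbb{Z} \cong \prod_{p \mid N} \mathbb{F}_p$, and the two polynomial equations decouple prime by prime. Relaxing the constraint $a,b \in A$ to $a,b \in \mathbb{Z}/N\mathbb{Z}$ only enlarges the solution count (which is harmless for an upper bound), so it suffices to bound, for each prime $p \mid N$ and each fixed $(\bar c,\bar d) \in \mathbb{F}_p^2$, the number of pairs $(\bar a,\bar b) \in \mathbb{F}_p^2$ with $\bar a + \bar b = \bar c + \bar d$ and $\bar a^2 + \bar b^2 = \bar c^2 + \bar d^2$ by $2$; the CRT then multiplies these local counts to give at most $2^{\omega(N)}$ globally.

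For odd $p$ this is the classical argument: from $2ab = (a+b)^2 - (a^2+b^2)$ and the corresponding identity for $c,d$, the two equations force $ab = cd$ (here invertibility of $2$ in $\mathbb{F}_p$ is used), so $\bar a, \bar b$ are precisely the roots of $X^2 - (\bar c+\bar d)X + \bar c\bar d = (X-\bar c)(X-\bar d)$, whence $(\bar a,\bar b)$ is one of the at most two permutations of $(\bar c,\bar d)$.

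The one genuinely delicate point—and the step I would treat most carefully—is the prime $p = 2$, where the factor-of-$2$ manipulation collapses and one cannot deduce $ab = cd$. Here I would instead use the Frobenius identity $x^2 = x$ in $\mathbb{F}_2$: it makes $a^2 + b^2 = a + b$, so the quadratic equation is redundant and the constraints reduce to the single linear equation $\bar a + \bar b = \bar c + \bar d$, which again has exactly two solutions $(\bar a,\bar b)$. With the local bound of $2$ established at every prime dividing $N$, the CRT product followed by the sum over $(c,d) \in A^2$ completes the estimate.
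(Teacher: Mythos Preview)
Your proof is correct and follows essentially the same approach as the paper: fix two of the four points, reduce the energy constraint to a pair of polynomial equations in the remaining two first coordinates, and use the Chinese Remainder Theorem to bound the solution count by $2$ at each prime. The only cosmetic difference is that the paper manipulates to $(t'-s')^2 = 2k_2 - k_1^2$ and counts square roots, whereas you manipulate to $ab = cd$ and invoke Vieta; these are equivalent. Your explicit treatment of the prime $p=2$ via Frobenius is in fact more careful than the paper's argument, which implicitly needs $2$ invertible when passing from a value of $t'-s'$ to a unique pair $(t',s')$.
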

\begin{proof}
    The additive energy $E(U)$ is defined as $|\{(x, y, x', y')\in U^4 : x+y = x' + y'\}|.$ It suffices to show that given $x, y\in U$, there are at most $2^{\omega(N)}$ choices of pairs $x', y'\in \Sigma$ such that $x' + y' = x + y$. 
    
    Write $x = (t, t^2)$, $y = (s, s^2)$, $x' = (t', t'^2)$, and $y' = (s', s'^2).$ Let $k = (k_1, k_2) = x + y.$ If $x' + y' = x+y$, then 
    $$t' + s' = k_1$$
    and 
    $$t'^2 + s'^2 = k_2.$$
    It follows that 
    $$(t' - s')^2 = t'^2 -2t's' + s'^2 = 2k_2 - k_1^2.$$
    Any admissible pair $(t',s')$ determines a square root
    $z=t'-s'$ of $2k_2-k_1^2$. Conversely, for each fixed value of $z$, the system
    \[
    t'+s'=k_1,\qquad t'-s'=z
    \]
has at most one solution if $N$ is odd, and at most two solutions if $N$ is even, since any two solutions differ by a simultaneous shift by $N/2$. Thus it suffices to bound the number of possible values of $z$. In particular, the lemma follows if we establish that the equation
    $$z^2 \equiv C \pmod N$$
    has at most $2^{\omega(N)}$ solutions if $N$ is odd, and at most $2^{\omega(N) - 1}$ solutions if $N$ is even. But this follows from the Chinese Remainder Theorem---if $N = p_1p_2\cdots p_{\omega(N)},$ where the $p_i$ are distinct primes, solutions to 
    $$z^2 \equiv C \pmod N$$
    are in bijection to $\omega(N)$-tuples $(z_1, z_2,\dots, z_{\omega(N)})$ where
    $$z_i^2 \equiv C \pmod{p_i}.$$
    Each such congruence has at most two solutions modulo $p_i$ if $p_i$ is an odd prime and exactly one solution modulo 2, and there are $\omega(N)$ congruences, giving the desired bound. The lemma follows.
\end{proof}
Combining the above lemma with Theorem \ref{th:IoMayeli} gives us a restriction estimate, whose precise form depends on whether we assume $N$ has a bounded number of prime factors. For ease of reference, we state as corollaries the following two results, to which we alluded in the introduction:

\begin{corollary}\label{NBoundedFactors}
    For any squarefree $N$ having at most $K$ prime divisors, the parabola $\Sigma$ satisfies the following restriction estimate: for any $f:(\mathbb{Z}/N\mathbb{Z})^2\rightarrow\mathbb{C},$
    $$\left(\frac{1}{|\Sigma|}\sum\limits_{m\in\Sigma}|\widehat{f}(m)|^2 \right)^{\frac{1}{2}}\leq 2^{\frac{K}{4}}\cdot N^{-1}\left(\sum\limits_{x\in (\mathbb{Z}/N\mathbb{Z})^2}|f(x)|^\frac{4}{3}\right)^\frac{3}{4}.$$
    In particular, whenever $N = pq$ where $p$ and $q$ are distinct primes, for any $f:(\mathbb{Z}/N\mathbb{Z})^2\rightarrow\mathbb{C}$ we have that 
    $$\left(\frac{1}{|\Sigma|}\sum\limits_{m\in\Sigma}|\widehat{f}(m)|^2 \right)^{\frac{1}{2}}\leq \sqrt{2}\cdot N^{-1}\left(\sum\limits_{x\in (\mathbb{Z}/N\mathbb{Z})^2}|f(x)|^\frac{4}{3}\right)^\frac{3}{4}.$$
\end{corollary}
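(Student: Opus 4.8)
The plan is to assemble the corollary directly from the two ingredients already in hand: the Iosevich--Mayeli Universal Restriction Estimate (Theorem 3.12 as stated above) and the additive-energy Lemma. Since all of the analytic content is packaged into the Universal Restriction Estimate and all of the arithmetic content into the Lemma, the corollary is purely a matter of substituting the right constants and then bounding $\omega(N)$.

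First I would verify the hypotheses of the Universal Restriction Estimate for the parabola $\Sigma = \{(t,t^2) : t \in \mathbb{Z}/N\mathbb{Z}\}$ in dimension $d = 2$. Since $\Sigma$ contains exactly one point for each $t \in \mathbb{Z}/N\mathbb{Z}$, we have $|\Sigma| = N = N^{d/2}$, so the size parameter is $\Lambda_{size} = 1$, as already noted in the text. For the energy parameter, the Lemma gives $E(U) \leq 2^{\omega(N)}|U|^2$ for every $U \subset \Sigma$, so we may take $\Lambda_{energy} = 2^{\omega(N)}$. Plugging these into the conclusion of the Universal Restriction Estimate, the prefactor becomes
$$\Lambda_{size}^{-\frac{1}{2}} \cdot \Lambda_{energy}^{\frac{1}{4}} \cdot N^{-d/2} = 1 \cdot \left(2^{\omega(N)}\right)^{\frac{1}{4}} \cdot N^{-1} = 2^{\frac{\omega(N)}{4}} \cdot N^{-1},$$
which is precisely the bound asserted in Theorem \ref{th:main}.

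To pass from Theorem \ref{th:main} to the corollary, I would invoke the hypothesis that $N$ has at most $K$ distinct prime divisors, i.e.\ $\omega(N) \leq K$. Since $2^{t/4}$ is increasing in $t$, this yields $2^{\omega(N)/4} \leq 2^{K/4}$, and substituting this into the estimate of Theorem \ref{th:main} gives the stated restriction inequality with constant $2^{K/4}$. For the final special case $N = pq$ with $p, q$ distinct primes, one simply observes that $\omega(N) = 2$ exactly, so the constant is $2^{2/4} = 2^{1/2} = \sqrt{2}$.

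I do not expect any genuine obstacle here: the corollary is a bookkeeping consequence of results already established. The only point requiring care is the exponent tracking in the substitution step — in particular, confirming that $\Lambda_{size} = 1$ so that no stray power of $N$ survives, and that the additive-energy bound enters through a fourth root, so that the factor $2^{\omega(N)}$ in the Lemma produces the factor $2^{\omega(N)/4}$ (and hence $2^{K/4}$) in the final constant rather than, say, $2^{\omega(N)/2}$.
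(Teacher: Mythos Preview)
Your proposal is correct and follows exactly the paper's approach: it applies the Iosevich--Mayeli Universal Restriction Estimate with $\Lambda_{size}=1$ and $\Lambda_{energy}=2^{\omega(N)}$ (from the Lemma) to obtain Theorem~\ref{th:main}, and then bounds $\omega(N)\le K$ to deduce the corollary, with the $N=pq$ case being the specialization $K=2$. The paper treats the corollary as an immediate consequence of Theorem~\ref{th:main} without writing out a separate proof, so your explicit substitution is, if anything, slightly more detailed than what appears in the text.
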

Finally, it follows from the Prime Number Theorem that the worst-case growth of $\omega(N)$ is on the order of
$$\frac{\log(N)}{\log\log(N)};$$
we therefore have:
\begin{corollary}\label{GeneralN}
    For any squarefree $N$, the parabola $\Sigma$ satisfies the following restriction estimate: for any $f:(\mathbb{Z}/N\mathbb{Z})^2\rightarrow\mathbb{C},$
    $$\left(\frac{1}{|\Sigma|}\sum\limits_{m\in\Sigma}|\widehat{f}(m)|^2 \right)^{\frac{1}{2}}\leq CN^{\frac{1}{4\log\log(N)}}\cdot N^{-1}\left(\sum\limits_{x\in (\mathbb{Z}/N\mathbb{Z})^2}|f(x)|^\frac{4}{3}\right)^\frac{3}{4}$$
    for some universal constant $C$ not depending on $N$.
    In particular, for any $\epsilon > 0$, there exists a $C_\epsilon>0$ such that for any squarefree $N$, the parabola $\Sigma$ satisfies the following restriction estimate: for any $f:(\mathbb{Z}/N\mathbb{Z})^2\rightarrow\mathbb{C},$
    $$\left(\frac{1}{|\Sigma|}\sum\limits_{m\in\Sigma}|\widehat{f}(m)|^2 \right)^{\frac{1}{2}}\leq C_\epsilon N^\epsilon\cdot N^{-1}\left(\sum\limits_{x\in (\mathbb{Z}/N\mathbb{Z})^2}|f(x)|^\frac{4}{3}\right)^\frac{3}{4}.$$
\end{corollary}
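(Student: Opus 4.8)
The plan is to deduce this corollary directly from Theorem \ref{th:main}, which already supplies the restriction estimate with constant $2^{\omega(N)/4}$; everything that remains is the purely number-theoretic task of bounding $2^{\omega(N)/4}$ by $C\,N^{1/(4\log\log N)}$. Taking logarithms, the target inequality is equivalent to
$$\frac{\omega(N)\log 2}{4} \le \log C + \frac{\log N}{4\log\log N},$$
i.e. to showing that $\omega(N)\log 2 \le \frac{\log N}{\log\log N} + O(1)$ uniformly over squarefree $N$.

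First I would establish the maximal order of $\omega$. If $\omega(N)=k$, then $N$ is divisible by $k$ distinct primes, so $N \ge p_1 p_2 \cdots p_k$, the product of the first $k$ primes; hence $\log N \ge \vartheta(p_k)$, where $\vartheta$ is the first Chebyshev function. By the Prime Number Theorem $\vartheta(x)\sim x$ and $p_k \sim k\log k$, so $\log N \ge (1+o(1))\,k\log k = (1+o(1))\,\omega(N)\log\omega(N)$. Inverting this relation (taking logarithms gives $\log k \sim \log\log N$) yields the classical maximal-order bound
$$\omega(N) \le (1+o(1))\,\frac{\log N}{\log\log N}.$$

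Now I would feed this into the reduced inequality. Since $\log 2 < 1$, the factor $(1+o(1))\log 2$ tends to $\log 2 < 1$, so there is a fixed $\delta>0$ with $(1+o(1))\log 2 \le 1-\delta$ once $N$ exceeds some threshold $N_0$; for such $N$ the maximal-order bound gives $\omega(N)\log 2 \le (1-\delta)\frac{\log N}{\log\log N} < \frac{\log N}{\log\log N}$, so the inequality holds with $C=1$. The finitely many squarefree $N \le N_0$ contribute only finitely many values of the ratio $2^{\omega(N)/4}/N^{1/(4\log\log N)}$, whose maximum can be absorbed into a single universal constant $C$; this establishes the first display. For the $\epsilon$-version I would simply note that $\frac{1}{4\log\log N}\to 0$, so for each fixed $\epsilon>0$ there is a threshold beyond which $N^{1/(4\log\log N)} \le N^{\epsilon}$, and absorbing the finitely many smaller $N$ into the constant yields the stated $C_\epsilon N^\epsilon$ bound.

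The only point requiring genuine care is the interplay between the $(1+o(1))$ in the maximal-order estimate and the target exponent; but because $\log 2 < 1$ is a strict inequality, this slack is automatic and no delicate optimization is needed. Accordingly, the main (and entirely standard) obstacle is simply invoking the Prime Number Theorem at the right strength to pin down $\omega(N) \le (1+o(1))\log N/\log\log N$ via the primorial lower bound $N \ge p_1\cdots p_k$.
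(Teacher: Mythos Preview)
Your proposal is correct and follows essentially the same approach as the paper: the paper's entire argument for this corollary is the one-line remark that by the Prime Number Theorem the worst-case growth of $\omega(N)$ is of order $\log N/\log\log N$, and you have simply written out the standard primorial justification of that bound together with the (necessary) observation that $\log 2<1$ absorbs the $(1+o(1))$ factor. There is no substantive difference in strategy.
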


\vskip.25in 

\section{Proof of Theorem \ref{th:maindual}}
If $f \equiv 0$, the result is trivial. Else, we write

$$\sum\limits_{x}|f(x)|^4 = \sum\limits_{x}f(x)g(x),$$

where $g(x) = f(x)\overline{f(x)}^2$. Let $h = \overline{g}$. Now,

\[
\begin{split}
\sum\limits_{x}f(x)g(x) &= N^{-1}\sum\limits_{x}\sum\limits_{m}\widehat{f}(m)\Sigma(m)e^{2\pi im\cdot x/N}g(x)\\
&= \sum\limits_{m}\widehat{f}(m) \Sigma(m)\overline{\widehat{h}(m)}\\
&\leq \|f\|_2 \|\Sigma\widehat{h}\|_2 \\
\end{split}
\]

by Cauchy-Schwarz and Plancherel, where we have used $\Sigma$ to denote the indicator function of the parabola by abuse of notation. Applying the restriction estimate given by Theorem \ref{th:main} (or more precisely by Corollary \ref{GeneralN}), we have that for all $\epsilon > 0$,

$$\|\Sigma\widehat{h}\|_2 \leq C_\epsilon N^{-(1-\epsilon)} |\Sigma|^{\frac{1}{2}}\|h\|_{4/3},$$

so that

\[\begin{split}
\sum\limits_{x}|f(x)|^4 &\leq C_\epsilon N^{-(1-\epsilon)} |\Sigma|^{\frac{1}{2}}\|f\|_2\|h\|_{4/3}\\
&= C_\epsilon N^{-(1-\epsilon)} |\Sigma|^{\frac{1}{2}}\|f\|_2\|g\|_{4/3}\\
&= C_\epsilon N^{-(1-\epsilon)} |\Sigma|^{\frac{1}{2}}\|f\|_2\left(\Slim{x}\left(|f(x)|^3\right)^\frac{4}{3}\right)^\frac{3}{4}\\
&= C_\epsilon N^{-(1-\epsilon)} |\Sigma|^{\frac{1}{2}}\|f\|_2\left(\Slim{x}|f(x)|^4\right)^\frac{3}{4}.
\end{split}\]

It follows that

$$\|f\|_4 \leq C_\epsilon N^{-(1-\epsilon)} |\Sigma|^{\frac{1}{2}}\|f\|_2;$$

rearranging and using $|\Sigma|^\frac{1}{2} = N^\frac{1}{2}$ then gives the result.

\vskip.25in

\section{Proof of the signal recovery result (Theorem \ref{th:loganparabola})}

\vskip.125in 

Let $g$ be a minimizer as in Logan's algorithm, and choose $h$ so that $f=g+h$. We have  
$$ {\|g\|}_1={\|f-h\|}_1={\|f-h\|}_{L^1(E)}+{\|h\|}_{L^1(E^c)}$$
$$ \ge {\|f\|}_1+ \left( {\|h\|}_{L^1(E^c)}-{\|h\|}_{L^1(E)} \right)$$

since $f$ is supported in $E$. 

If we can show that 
$$ {\|h\|}_{L^1(E)}<{\|h\|}_{L^1(E^c)},$$ 

this will imply that ${\|g\|}_1>{\|f\|}_1$, which is impossible since $g$ is a minimizer. The resulting contradiction will show that $h \equiv 0$, thus completing the proof. 

We have 
\begin{equation} \label{eq:almost} {\|h\|}_{L^1(E)} \leq {|E|}^{\frac{1}{2}} \cdot {\|h\|}_{L^2(E)} \leq {|E|}^{\frac{1}{2}} \cdot {\|h\|}_2. \end{equation} 

By construction of $g$, we have that $\widehat{h}$ is supported on the parabola, and thus the estimate (\ref{eq:talagrandparabola}) implies that the right-hand side of (\ref{eq:almost}) is bounded by 
$$ {|E|}^{\frac{1}{2}} \cdot C^2_{\epsilon} N^{2 \epsilon} \cdot N^{-1} \cdot {\|h\|}_1. $$

It follows that if 
$$ {|E|}^{\frac{1}{2}} \cdot C^2_{\epsilon} N^{2 \epsilon} \cdot N^{-1}< \frac{1}{2}, $$ 

then 
$$ {\|h\|}_{L^1(E)}<{\|h\|}_{L^1(E^c)}.$$

This amounts to the condition 
$$ |E|<\frac{1}{4} \cdot N^{2-4 \epsilon} \cdot \frac{1}{C^4_{\epsilon}}, $$

as required.

\section{Acknowledgments}
The author would like to thank Alex Iosevich for posing this problem to him, and Alex Iosevich and Azita Mayeli for teaching him about restriction theory, uncertainty principles, and signal recovery. The author completed this work while partially supported by a Graduate Center Fellowship at the CUNY Graduate Center.


\begin{thebibliography}{17}

    \bibitem{BIMN2025} W. Burstein, A. Iosevich, A. Mayeli, and H. Nathan, {\it Fourier minimization and time series imputation}, (arXiv:2506.19226), (2025). 
    
    \bibitem{DS89} D. Donoho and P. Stark, {\it Uncertainty principle and signal recovery}, SIAM Journal on Applied Math., (1989), Society for Industrial and Applied Mathematics, volume 49, No. 3, pp. 906-931. 
    
    \bibitem{HickmanWright} J. Hickman and J. Wright, {\it The Fourier Restriction and Kakeya Problems over Rings of Integers Modulo N}, Discrete Analysis, (2018), 54 pages.

    \bibitem{IoKohCircles} A. Iosevich and D. Koh, {\it Extension theorems for the Fourier Transform Associated with Nondegenerate Quadratic Surfaces in Vector Spaces over Finite Fields}, Illinois J. Math. 52 (2008), no. 2, 611–628.

\bibitem{IosevichKohLewko2020}
A.~Iosevich, D.~Koh, and M.~Lewko,
\newblock Finite field restriction estimates for the paraboloid in high even dimensions,
\newblock \emph{J. Funct. Anal.} 278 (2020), no. 11, 108450.

    \bibitem{IoMayeli} A. Iosevich and A. Mayeli, {\it Uncertainty principles, restriction, Bourgain’s $\Lambda_q$ theorem, and signal recovery}, Applied and Computational Harmonic Analysis, Volume 76, April 2025, article 101734.

    \bibitem{IoMPak} A. Iosevich, B. Murphy, and J. Pakianathan, {\it The Square Root Law and Structure of Finite Rings}, Mosc. J. Comb. Number Theory 7 (2017), no. 1, 38-72.

    \bibitem{IKLM24} A. Iosevich, B. Kashin, I. Limonova, and A. Mayeli, {\it Subsystems of orthogonal systems and the recovery of sparse signals in the presence of random losses}, Russian Mathematical Surveys, (2024), Volume 79, Issue 6, Pages 1095-1097.

    \bibitem{Kings1} N. Kingsbury-Neuschotz, {\it The Square-Root Law Does Not Hold in the Presence of Zero Divisors}, Journal of Number Theory, Volume 280, Mar. 2025, 481-505.

    \bibitem{Kings2} N. Kingsbury-Neuschotz, {\it Square-Root Cancellation, Averages over Hyperplanes, and the Structure of Finite Rings}, (arXiv:2504.00363), submitted for publication (2025).

\bibitem{Koh2016}
D.~Koh,
\newblock Conjecture and improved extension theorems for paraboloids in the finite field setting,
\newblock \emph{Math. Z.} 294 (2020), 51--69.

\bibitem{KohPhamVinh2018}
D.~Koh, H.~Pham, and L.~A.~Vinh,
\newblock Extension theorems for paraboloids in finite fields,
\newblock \emph{J. Funct. Anal.} 274 (2018), 1239--1254.

\bibitem{Lewko2024}
M.~Lewko,
\newblock A bilinear approach to the finite field restriction problem,
\newblock Preprint, arXiv:2408.03514 (2024).

\bibitem{LewkoLewko2012}
A.~Lewko and M.~Lewko,
\newblock Endpoint restriction estimates for the paraboloid over finite fields,
\newblock \emph{Proc. Amer. Math. Soc.} 140 (2012), 2013--2028.

\bibitem{Logan} Logan, Benjamin Franklin, Jr. {\it Properties of High-Pass Signals.} Ph.D. dissertation, Department of Electrical Engineering, Columbia University, 1965.

    \bibitem{MS73} T. Matolcsi and J. Szucs, {\it Intersection des mesures spectrales conjug\'ees.} C.R. Acad. Sci. S\'er. I Math. \textbf{277} (1973), 841-843.

    \bibitem{MockenhauptTao} G. Mockenhaupt and T. Tao, {\it Restriction and Kakeya phenomena for finite fields}, Duke Math. J. {\bf 121} (2004), no. 1, 35–74. MR2031165.
    
\end{thebibliography}
\end{document}